\definecolor{my-blue}{cmyk}{1,0.6,0,0}
\definecolor{my-green}{cmyk}{0.8,0,1,0.5}
\def\FF{{\mathbb F}}
\def\NN{{\mathbb N}}
\def\ZZ{{\mathbb Z}}
\def\QQ{{\mathbb Q}}
\def\id{{\rm id}}
\newcommand{\ls}[1]{(\!(#1)\!)}  
\def\betr#1{\lvert #1\rvert}
\DeclareMathOperator{\GL}{GL}
\newcommand{\Kbar}{\overline{K}_\infty}
\newcommand{\Fq}{\mathbb{F}_{\! q}}
\newcommand{\KI}{K_\infty}
\newcommand{\e}{\exp_\phi}
\theoremstyle{plain}
\newtheorem{thm}{Theorem}[section]
\newtheorem{prop}[thm]{Proposition}
\theoremstyle{definition}
\newtheorem{exmp}[thm]{Example}
\newtheorem{rem}[thm]{Remark}
\begin{document}

\title{On field extensions given by periods of Drinfeld modules}
\author{Andreas Maurischat}
\date{}



\begin{abstract}
In this short note, we answer a question raised by M.~Papikian on a universal upper bound for the degree of the extension of $K_\infty$ given by adjoining the periods of a Drinfeld module of rank $2$. We show that contrary to the rank $1$ case such a universal upper bound does not exist, and the proof generalises to higher rank. Moreover, we give an upper and lower bound for the extension degree depending on the valuations of the defining coefficients of the Drinfeld module. In particular, the lower bound shows the non-existence of a universal upper bound.
\end{abstract}

\maketitle

\section{Introduction}


Let $K=\Fq(\theta)$ be the rational function field over the finite field $\Fq$ with $q$ elements. A Drinfeld module over $K$ of rank $r\geq 1$ (and of generic characteristic) is given by a homomorphism of $\Fq$-algebras
\[\phi:\Fq[\theta] \to K\{\tau\}, \theta\mapsto \phi_{\theta}=\theta+a_1\tau+\ldots+a_r\tau^r \] 
for some $a_1,\ldots, a_r\in K$, $a_r\neq 0$, 
where $K\{\tau\}$ is the skew polynomial ring of $\Fq$-linear maps $K\to K$. More precisely,
$\tau:K\to K$ is the $q$-power Frobenius map on $K$, and 
\[  \bigl(\sum_{i=0}^n a_i\tau^i\bigr)\cdot \bigl(\sum_{j=0}^m b_j\tau^j\bigr)=\sum_{i=0}^n \sum_{j=0}^m a_ib_j^{q^i}\tau^{i+j}, \]
for all $\sum_{i=0}^n a_i\tau^i,\sum_{j=0}^m b_j\tau^j\in K\{\tau\}$.\footnote{We will only provide those properties of Drinfeld modules that we will use in this note.
For more details on Drinfeld modules we refer the reader to the standard text books, e.g.~\cite{dg:bsffa}, or \cite{dt:ffa}.}

Let $\Kbar$ denote the algebraic closure of 
$\KI=\Fq\ls{\tfrac{1}{\theta}}$, the completion of $K$ at the infinite place.
Attached to $\phi$ is the so called exponential function $\e:\Kbar\to \Kbar, x\mapsto \sum_{i=0}^\infty \alpha_i x^{q^i}$ which is the unique $\Fq$-linear map satisfying $\e(a\cdot x)=\phi_a(\e(x))$ for all $a\in \Fq[\theta]$ and $x\in \Kbar$, as well as $\frac{\partial}{\partial x} \e=\id_{\Kbar}$ (i.e.~$\alpha_0=1$). 

It is well known that $\e$ is surjective, and that its kernel $\Lambda_\phi$ is a discrete $\Fq[\theta]$-submodule of $\Kbar$ -- called the period lattice of $\phi$ -- and its rank equals the rank $r$ of the Drinfeld module, i.e. the degree in $\tau$ of $\phi_{\theta}$. Hence, one has an isomorphism of $\Fq[\theta]$-modules $\Kbar/\Lambda_\phi \to \Kbar$ with the scalar action on the quotient $\Kbar/\Lambda_\phi$, and the action via $\phi$ on the domain $\Kbar$.

Drinfeld's uniformization theorem over $\Kbar$ states that $\phi \leftrightarrow \Lambda_\phi$ provides a bijective correspondence between Drinfeld modules over $\Kbar$ and discrete finitely generated $\Fq[\theta]$-submodules of $\Kbar$, which explains one aspect why Drinfeld modules are considered as analogues of elliptic curves.
For further analogies, we refer the reader to the survey article of Deligne-Husemoller \cite{pd-dh:sdm}, or even the original papers by Drinfeld \cite{vd:em}, \cite{vd:em2}.

Assume now, we are given a Drinfeld module $\phi$ over $K$ of rank $r$, and let the lattice $\Lambda_\phi$ be generated by $z_1,\ldots, z_r\in \Kbar$. Then $\KI(z_1,\ldots, z_r)/\KI$ is a finite extension.
There are several questions on those extensions $\KI(z_1,\ldots, z_r)/\KI$ that naturally occur:
\begin{enumerate}
\item Is there an upper bound on the degree $[\KI(z_1,\ldots, z_r):\KI]$ independent of $\phi$?
\item Is there such a global bound for fixed rank $r$?
\item Is there a global bound on the degree of the extension of the constant fields, i.e. on the degree $[\overline{\Fq}\cap \KI(z_1,\ldots, z_r) : \Fq]$?
\item Is there such a global bound on the degree of the extension of the constant fields  for fixed $r$?
\end{enumerate}

Of course, a positive answer for the first question would imply positive answers for the other questions as well, and also a positive answer on questions (2) or (3) would imply one for question (4).

However, it is easy to construct Drinfeld modules of increasing rank, where even the constant field extensions get arbitrarily large (see Example \ref{ex:arbitrary-large}).
So questions (3) and (1) have a negative answer.

The questions that we were asked by M.~Papikian, were therefore questions (2) and (4).

For rank $r=1$, $\Lambda_\phi=\Fq[\theta]\cdot z$, it is known that the degree $[\KI(z):\KI]$ is bounded by $(q-1)$. Namely, in this case, the extension $\KI(z)$ equals $\KI(e_0)$ where $e_0\ne 0$ is a $\theta$-torsion element of $\phi$, i.e.~satisfies $\phi_\theta(e_0)=0$, and the equation $e_0^{-1}\phi_\theta(e_0)=0$ provides a polynomial relation for $e_0$ over $K$ of degree $q-1$.
So question (2) and (4) have a positive answer for $r=1$, and Gekeler even gave an explicit formula for the degree in this case (see \cite[Thm.~4.11]{eg:gitcm}).

\medskip

The main result of this note is that question (2) has a negative answer for $r=2$, and the proof easily generalizes to higher rank. In the considered rank $2$, however, we give more precise bounds on the extension depending on the valuations at infinity of the coefficients $a_1$ and $a_2$ (where as above $\phi_\theta=\theta+a_1\tau+a_2\tau^2$). In Theorem \ref{thm:bounds}, we do not only give a lower bound for the extension which answers question (2) in the negative (additional statement in part b)), but also give upper bounds which might be useful for computational aspects.

This still leaves open the answer to question (4), and unfortunately, we are not able to solve it. A positive answer to question (4) would even be more interesting, since it would imply that the moduli space of Drinfeld modules over $K$ of rank $r$ is geometric over some finite extension of $\Fq$.

\section{Notation}

Let $K=\Fq(\theta)$ be the rational function field over the finite field $\Fq$ with $q$ elements, and $A=\FF_q[\theta]$ the polynomial ring inside $K$. Let
$\KI=\FF_q\ls{\tfrac{1}{\theta}}$ be the completion of $K$ at the infinite place, and $\Kbar$ an algebraic closure of $\KI$.
On $K_\infty$ we take the $\infty$-adic valuation $v:\KI\to \ZZ\cup \{\infty\}$ given by
$v(\theta)=-1$, and extend it to a $\QQ$-valued valuation on the algebraic closure $\Kbar$. The reader should have in mind the associated absolute value $\betr{\cdot}$ given by
$\betr{x}=q^{-v(x)}$ for all $x\in \Kbar$, when we speak of ``convergent series'', ``small neighbourhoods'' etc. However, we will not explicitly use $\betr{\cdot}$, but only the valuation $v$.

As in the introduction, $\tau:\Kbar\to \Kbar$ denotes the $q$-power Frobenius, and
\[\phi:A \to K\{\tau\}, \theta\mapsto \phi_{\theta}=\theta+a_1\tau+\ldots+a_r\tau^r \] 
with $a_1,\ldots, a_r\in K, a_r\neq 0$, a Drinfeld module over $K$ of rank $r$. The associated exponential map is denoted by $\e$, and the period lattice by $\Lambda_\phi$. The exponential map $\e$ has a local inverse $\log_\phi$, i.e.~there is some neighbourhood $B\subseteq \Kbar$ of $0$, and a map $\log_\phi:B\to \Kbar$ such that $\e\circ \log_\phi=\id_B$. Both $\e$, and $\log_\phi$ have power series expansions around $0$ with coefficients in $K$, the one for $\e$ converging on all of
$\Kbar$, the one for $\log_\phi$ converging on $B$.

 For $a\in A$, let the $a$-torsion points of $\phi$ be denoted by
\[ \phi[a]:=\{ x\in \Kbar \mid \phi_a(x)=0 \}. \]
This is a free $A/(a)$-module of rank $r$, and hence an $\Fq$-vector space of dimension $r\cdot \deg(a)$.
Later we will only consider $\theta$-powers, i.e. $a=\theta^{n+1}$ for $n\geq 0$, and mention already that for $n\geq 1$,
\begin{equation}\label{eq:torsion-tower} \phi[\theta^{n+1}] \,\,=\,\, \{ x\in \Kbar \mid \phi_\theta(x)\in \phi[\theta^{n}] \} 
\,\,=\,\, \bigoplus_{j=1}^r \Fq e_{n,j} \oplus \phi[\theta^{n}] 
\end{equation}
where $e_{n,1},\ldots, e_{n,r}$ are any elements in $\phi[\theta^{n+1}]$ that are linearly independent modulo
$\phi[\theta^{n}]$. 

For getting the bounds mentioned in the introduction, we first state a connection between the lattice $\Lambda_\phi$ and the $\theta$-power torsion. As we couldn't find a reference for this connection, we also give its proof.

\begin{prop}\label{prop:lattice--torsion-points}
For any Drinfeld module $\phi$ defined over $K$ with period lattice $\Lambda_\phi$, one has
\[  \KI(\Lambda_\phi)=\bigcup_{n\in\NN} \KI(\phi[\theta^{n+1}]). \]
\end{prop}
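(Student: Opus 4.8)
The plan is to prove the two inclusions separately, using the exponential map $\e$ and its local inverse $\log_\phi$ to pass back and forth between the lattice and the torsion points.

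For the inclusion $\bigcup_n \KI(\phi[\theta^{n+1}]) \subseteq \KI(\Lambda_\phi)$, I would argue as follows. Fix $n$ and a torsion point $x \in \phi[\theta^{n+1}]$, so $\phi_{\theta^{n+1}}(x) = 0$. Since $\e$ is surjective, write $x = \e(y)$ for some $y \in \Kbar$; then $\e(\theta^{n+1} y) = \phi_{\theta^{n+1}}(\e(y)) = 0$, so $\theta^{n+1} y \in \Lambda_\phi$, i.e.\ $y = \theta^{-(n+1)}\lambda$ for some $\lambda \in \Lambda_\phi$. Thus $x = \e(\theta^{-(n+1)}\lambda)$. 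Now $\theta^{-(n+1)}\lambda$ lies in a small neighbourhood of $0$ once $n$ is large (since $v(\theta^{-(n+1)}) = n+1 \to \infty$ and $\lambda$ is fixed), but one must still express $x$ in terms of the lattice generators. The cleanest route: $\Lambda_\phi$ is a finitely generated $A$-module, say generated by $z_1,\dots,z_r$, and $x = \e\bigl(\sum_j c_j z_j\bigr)$ for suitable $c_j \in \theta^{-(n+1)}A$; since $\e$ has a power-series expansion with coefficients in $K \subseteq \KI$, and $\sum_j c_j z_j \in \KI(\Lambda_\phi)$, the value $\e\bigl(\sum_j c_j z_j\bigr)$ lies in $\KI(\Lambda_\phi)$ provided the series converges there — which it does, as $\e$ converges on all of $\Kbar$ and $\KI(\Lambda_\phi)$ is complete (being a finite extension of the complete field $\KI$). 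Hence $x \in \KI(\Lambda_\phi)$, giving the inclusion.

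For the reverse inclusion $\KI(\Lambda_\phi) \subseteq \bigcup_n \KI(\phi[\theta^{n+1}])$, take a lattice generator $z = z_k$. I want to show $z$ lies in $\KI(\phi[\theta^{n+1}])$ for $n$ large. The idea is to apply $\log_\phi$: choose $n$ large enough that $\theta^{-(n+1)} z$ lies in the domain of convergence $B$ of $\log_\phi$ (possible since $v(\theta^{-(n+1)}z) \to \infty$). Set $w := \e(\theta^{-(n+1)} z)$. Then $\phi_{\theta^{n+1}}(w) = \e(z) = 0$ since $z \in \Lambda_\phi$, so $w \in \phi[\theta^{n+1}]$. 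On the other hand, $\theta^{-(n+1)} z \in B$ and $\e$ restricted to $B$ is injective with inverse $\log_\phi$, so $\theta^{-(n+1)} z = \log_\phi(w)$, whence $z = \theta^{n+1}\log_\phi(w)$. Since $\log_\phi$ has a power-series expansion with coefficients in $K$ converging on $B$, and $w \in \phi[\theta^{n+1}] \subseteq \bigcup_m \KI(\phi[\theta^{m+1}])$ which is a field containing $\KI$, and moreover $\KI(w)$ is complete, the value $\log_\phi(w) \in \KI(w)$; therefore $z \in \KI(w) \subseteq \KI(\phi[\theta^{n+1}])$. Doing this for each generator $z_1,\dots,z_r$ (and taking the maximum of the finitely many required $n$'s) gives $\KI(\Lambda_\phi) \subseteq \KI(\phi[\theta^{N+1}])$ for some $N$, hence the inclusion.

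The main obstacle I anticipate is the convergence/completeness bookkeeping: one must be careful that evaluating the power series $\e$ or $\log_\phi$ at an element of a subfield $L \supseteq \KI$ actually produces an element of $L$, which requires $L$ to be complete and the argument to lie in the radius of convergence. Since any field appearing here is a \emph{finite} extension of the complete field $\KI$, it is itself complete, so this works — but it should be stated explicitly. A secondary point is checking that $\theta^{-(n+1)}\lambda \to 0$ uniformly enough: because $\Lambda_\phi$ is finitely generated over $A$ and discrete, only finitely many generators are involved and the valuation estimate $v(\theta^{-(n+1)} z_j) = (n+1) + v(z_j) \to \infty$ handles it directly. Finally, one should note that $\phi[\theta^{n+1}] \subseteq \phi[\theta^{m+1}]$ for $n \le m$ (immediate from \eqref{eq:torsion-tower}), so the union $\bigcup_n \KI(\phi[\theta^{n+1}])$ is genuinely an increasing union of fields, and landing in one of them suffices.
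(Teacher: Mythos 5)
Your proof is correct and follows essentially the same approach as the paper's own sketch: both inclusions are handled by expressing the relevant elements via $\e$ (with $K$-coefficients, entire) and $\log_\phi$ (with $K$-coefficients, convergent on $B$), and appealing to completeness of finite extensions of $\KI$ to conclude that evaluating these series lands in the expected subfield. Your write-up fills in the convergence/completeness bookkeeping that the paper leaves implicit, but the underlying argument is the same.
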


\begin{proof}[Sketch of proof]
The exponential $\e:\Kbar\to \Kbar$ and its local inverse $\log_\phi:B\subset \Kbar\to \Kbar$ are given by power series with $K$-coefficients. Since finite extensions of $\KI$ are still complete, for any $x\in \Kbar$ one has $\e(x)\in \KI(x)$, and for all
$y\in B\subset \Kbar$ one has $\log_\phi(y)\in \KI(y)$.

As $\e$ is surjective, every $\theta^n$-torsion element $e$ is given as 
\[  e=\e\left( \theta^{-n}\lambda \right) \]
for some $\lambda\in \Lambda_\phi$. Hence, we immediately obtain:
\[  \KI(\Lambda_\phi)\supseteq \bigcup_{n\in\NN} \KI(\phi[\theta^n]). \]
On the other hand, for every $\lambda\in \Lambda_\phi$, there is some $n\in \NN$ such that
\[  e:=\e\left( \theta^{-n}\lambda \right) \]
lies in the radius of convergence $B$ of $\log_\phi$, and we get $\lambda$ as
\[  \lambda =\phi_\theta^n\log_\phi(e)\in \KI(\phi[\theta^n]) \]
showing the reverse inclusion.
\end{proof}

\begin{exmp}\label{ex:arbitrary-large}
Using the previous proposition, it is easy to construct Drinfeld modules where the extension in question has arbitrarily large degree, and even the extension of constants is arbitrarily large. Namely for any $r\geq 1$, take $\phi_\theta=\theta - \theta \tau^r$. Then $\theta$-torsion is given by the roots of
$\phi_\theta(X)=\theta X - \theta X^{q^r}=-\theta(X^{q^r}-X)$. These are just the elements of the field $\mathbb{F}_{\!q^r}$, and hence also the extensions in question contain $\mathbb{F}_{\!q^r}$, and the extension degree is at least $r$.
\end{exmp}

\section{Bounds on the extension by lattice points}

Proposition~\ref{prop:lattice--torsion-points} allows us to study the field extension generated by the $\theta$-power torsion points, instead of working with the lattice directly.

For studying the torsion extensions, we intensively use the Newton polygons for the defining equation
of a $\theta^{i+1}$-torsion point $e_i$ given by $\phi_{\theta}(e_i)=e_{i-1}$. Here $e_{i-1}$ is a previously ''determined'' $\theta^{i}$-torsion point. Furthermore, using the description \eqref{eq:torsion-tower}, we see that the extension by all $\theta$-power 
torsion is determined by a basis of $\theta$-torsion and one \textit{convergent division tower} for each basis element $e_0$, i.e.~a sequence $(e_i)_{i\geq 1}$ with $\phi_{\theta}(e_i)=e_{i-1}$ and 
$\lim_{i\to \infty} v(e_i)=\infty$.

\begin{thm}\label{thm:bounds}
Let $\phi:\Fq[\theta]\to K\{\tau\}$ with $\phi_\theta=\theta+a_1\tau+a_2\tau^2$ ($a_1,a_2\in K$, $a_2\neq 0$) be a Drinfeld module of rank $2$.
\begin{enumerate}
\item[a)] If $v(a_1)\geq\frac{v(a_2)-q}{q+1}$, then $\KI(\Lambda_\phi)=\KI(\phi[\theta])$ and $[\KI(\Lambda_\phi):\KI]$ divides $(q^2-1)\cdot (q^2-q)$.
\item[b)] If $v(a_1)<\frac{v(a_2)-q}{q+1}$, let 
$n=\max \{ j\in \NN_0\mid v(a_1)<\frac{v(a_2)-q^{j+1}}{q+1} \}$.
Then 
\[  \KI(\Lambda_\phi)=\bigcup_{j=0}^n \KI(\phi[\theta^{j+1}]), \]
and $[\KI(\Lambda_\phi):\KI]$ divides $(q-1)^2\cdot q^{n+1}$.
If furthermore $v(a_1)-v(a_2)$ is prime to $q$, then $q^{n+1}$ divides
$[\KI(\Lambda_\phi):\KI]$.
\end{enumerate}
\end{thm}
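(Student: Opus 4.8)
The plan is to pass to the torsion description of $\KI(\Lambda_\phi)$ afforded by Proposition~\ref{prop:lattice--torsion-points}, and to analyse, level by level, a division tower above a chosen $\theta$-torsion basis using Newton polygons. First I would compute the Newton polygon of $\phi_\theta(X)=\theta X+a_1 X^q+a_2 X^{q^2}$, whose relevant vertices sit at $(1,v(\theta))=(1,-1)$, $(q,v(a_1))$ and $(q^2,v(a_2))$. The slopes of the segments determine the valuations of the two ``new'' $\theta$-torsion elements $e_{0,1},e_{0,2}$, and the dichotomy in the theorem is exactly the condition that decides whether the lower vertex $(q,v(a_1))$ lies on or below the segment joining $(1,-1)$ to $(q^2,v(a_2))$, i.e.\ whether $v(a_1)\ge \frac{v(a_2)-q}{q+1}$ or not. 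In case a) the two slopes (hence the two torsion valuations) are $\frac{v(a_2)-(-1)}{q^2-1}$ split appropriately, and I would show that the extension $\KI(\phi[\theta])/\KI$ is generated by the roots of a degree-$(q^2-1)$ Eisenstein-type factor coming from $e_0^{-1}\phi_\theta(e_0)$, whose Galois group embeds in the affine-type group of order $(q^2-1)(q^2-q)$; this also accounts for the possible constant (unramified) part.

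Next, for the tower step I would set up the recursion $\phi_\theta(e_{i})=e_{i-1}$ and track $v(e_i)$ through the Newton polygon of $\phi_\theta(X)-e_{i-1}$: once $v(e_{i-1})$ is large enough, the constant term dominates and the unique small root has valuation $v(e_i)=v(e_{i-1})+1$, so the tower converges and contributes nothing new beyond a fixed level; before that threshold, each step can genuinely enlarge the field. The critical inequality governing whether step $i$ is ``ramified'' versus ``convergent'' is precisely $v(a_1)<\frac{v(a_2)-q^{i+1}}{q+1}$, which is why $n$ is defined as the largest such $j$. I would prove by induction on $i\le n$ that at level $i$ the relevant Newton-polygon segment has horizontal length $q$ and denominator $q$ (after the previous fractional parts are absorbed), so $[\KI(\phi[\theta^{i+1}]):\KI(\phi[\theta^{i}])]$ divides $q$ for each basis tower and the two torsion directions together contribute a factor dividing $q$ per level and $(q-1)^2$ at the bottom; telescoping gives that $[\KI(\Lambda_\phi):\KI]$ divides $(q-1)^2 q^{n+1}$, and for $i>n$ the convergent-tower argument shows no further extension, which is the identification $\KI(\Lambda_\phi)=\bigcup_{j=0}^n\KI(\phi[\theta^{j+1}])$.

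For the lower bound in part b), under the extra hypothesis that $v(a_1)-v(a_2)$ is prime to $q$, I would show that the slope appearing at level $n$ (equivalently the valuation of the newly adjoined $e_{n,j}$) has exact denominator $q^{n+1}$ as a reduced fraction: the point is that the governing quantity is an affine-linear function of $v(a_1)$ and $v(a_2)$ with a controlled power of $q$ in the denominator, and the coprimality of $v(a_1)-v(a_2)$ to $q$ forces no cancellation. Then the ramification index $e(\KI(e_{n,j})/\KI)$ is divisible by $q^{n+1}$, hence so is $[\KI(\Lambda_\phi):\KI]$. I expect the main obstacle to be the bookkeeping of the Newton polygons along the tower: one has to verify that at each level the ``old'' vertices stay in the positions forced by the previous level's valuations, that the correct segment always has horizontal width exactly $q$, and that the fractional parts accumulate exactly as $q^{-(i+1)}\cdot(\text{integer})$ so that the denominators multiply cleanly — in particular the induction hypothesis must be strong enough to pin down $v(e_{i-1})$ precisely (not just a bound), and one must handle the interaction of the two independent division towers so that their ramification contributions genuinely multiply rather than overlap.
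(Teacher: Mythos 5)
Your proposal follows essentially the same route as the paper's proof: use Proposition~\ref{prop:lattice--torsion-points} to pass to the $\theta$-power torsion tower, set up the recursion $\phi_\theta(e_i)=e_{i-1}$, analyse the Newton polygon of $\phi_\theta(X)-e_{i-1}$ at each level (with the dichotomy $(q,v(a_1))$ on/above versus below the chord from $(1,-1)$ to $(q^2,v(a_2))$ producing exactly cases a) and b)), observe that one of the two division towers converges immediately ($v(e_i)=v(e_{i-1})+1$ from the length-$1$ segment through $(0,v(e_{i-1}))$ and $(1,-1)$), while the other passes through a segment of horizontal length $q$ precisely as long as $v(a_1)<\frac{v(a_2)-q^{i+1}}{q+1}$, and extract the upper bound by multiplying segment lengths and the lower bound from the $q$-part of the denominator of $v(e_n)$. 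Your observation that coprimality of $v(a_1)-v(a_2)$ to $q$ prevents cancellation in the numerator of $v(e_n)=\frac{v(a_1)(1+q-q^{n+1})-v(a_2)}{q^{n+1}(q-1)}$ is exactly the argument in the paper, since the numerator is $\equiv v(a_1)-v(a_2)\pmod q$. One small correction in terminology: in case a) the Galois group of $\KI(\phi[\theta])/\KI$ embeds into $\GL_2(\Fq)$ (acting on the two-dimensional $\Fq$-vector space $\phi[\theta]$), not into an ``affine-type group''; the order $(q^2-1)(q^2-q)$ you quote is indeed $\#\GL_2(\Fq)$, so the bound is the same. You should also make explicit (as you hint at the end) that after the $\theta$-torsion level only \emph{one} of the two towers contributes ramification, so the per-level factor is $q$, not $q^2$; this is what makes the telescoping give $(q-1)^2 q^{n+1}$ and not a larger bound.
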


\begin{proof}
The non-zero elements of $\theta$-torsion are the roots of the polynomial 
$\phi_\theta(X)/X=a_2X^{q^2-1}+a_1X^{q-1}+\theta $.
The line through the points $(0,v(\theta))=(0,-1)$ and $(q^2-1,v(a_2))$ is given by the equation
$y=-1+\frac{v(a_2)+1}{q^2-1}\cdot x$.

\noindent
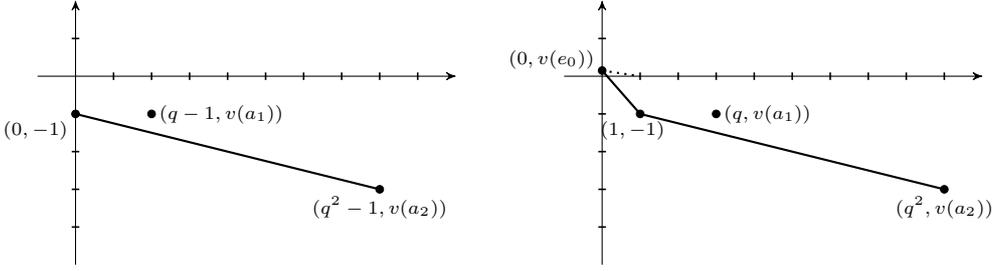
\begin{figure}[ht] 
\begin{tikzpicture}[
    scale=0.5,
    axis/.style={thin, ->, >=stealth'},
    every node/.style={color=black},
    ]
    \tiny

    \draw[axis] (-1,0)  -- (10,0) ;
    \draw[axis] (0,-5) -- (0,2) ;
    \draw[semithick] (1,-0.1) -- (1,0.1); 
    \draw[semithick] (2,-0.1) -- (2,0.1); 
    \draw[semithick] (3,-0.1) -- (3,0.1); 
    \draw[semithick] (4,-0.1) -- (4,0.1); 
    \draw[semithick] (5,-0.1) -- (5,0.1); 
    \draw[semithick] (6,-0.1) -- (6,0.1); 
    \draw[semithick] (7,-0.1) -- (7,0.1); 
    \draw[semithick] (8,-0.1) -- (8,0.1); 
    \draw[semithick] (9,-0.1) -- (9,0.1); 

    \draw[semithick] (-0.1,1) -- (0.1,1); 
    \draw[semithick] (-0.1,-1) -- (0.1,-1); 
    \draw[semithick] (-0.1,-2) -- (0.1,-2); 
    \draw[semithick] (-0.1,-3) -- (0.1,-3); 
    \draw[semithick] (-0.1,-4) -- (0.1,-4); 

    \draw [fill] (0,-1) circle [radius=.1] node [below left] (0,-1) {$(0,-1)$};
    \draw [fill] (2,-1) circle [radius=.1] node [right] (2,-1) {$(q-1,v(a_1))$};
    \draw [fill] (8,-3) circle [radius=.1] node [below] (8,-3) {$(q^2-1,v(a_2))$};

    \draw[thick] (0,-1) -- (8,-3); 
\end{tikzpicture} \hfill 
\begin{tikzpicture}[
    scale=0.5,
    axis/.style={thin, ->, >=stealth'},
    every node/.style={color=black},
    ]
    \tiny

    \draw[axis] (-1,0)  -- (10,0) ;
    \draw[axis] (0,-5) -- (0,2) ;
    \draw[semithick] (1,-0.1) -- (1,0.1); 
    \draw[semithick] (2,-0.1) -- (2,0.1); 
    \draw[semithick] (3,-0.1) -- (3,0.1); 
    \draw[semithick] (4,-0.1) -- (4,0.1); 
    \draw[semithick] (5,-0.1) -- (5,0.1); 
    \draw[semithick] (6,-0.1) -- (6,0.1); 
    \draw[semithick] (7,-0.1) -- (7,0.1); 
    \draw[semithick] (8,-0.1) -- (8,0.1); 
    \draw[semithick] (9,-0.1) -- (9,0.1); 

    \draw[semithick] (-0.1,1) -- (0.1,1); 
    \draw[semithick] (-0.1,-1) -- (0.1,-1); 
    \draw[semithick] (-0.1,-2) -- (0.1,-2); 
    \draw[semithick] (-0.1,-3) -- (0.1,-3); 
    \draw[semithick] (-0.1,-4) -- (0.1,-4); 

    \draw [fill] (1,-1) circle [radius=.1] node [below, xshift=-1mm ] (1,-1) {$(1,-1)$};
    \draw [fill] (3,-1) circle [radius=.1] node [right] (3,-1) {$(q,v(a_1))$};
    \draw [fill] (9,-3) circle [radius=.1] node [below] (9,-3) {$(q^2,v(a_2))$};
    \draw [fill] (0,0.15) circle [radius=.1] node [left, yshift=1.5mm] (0,0.15) {$(0,v(e_0))$};
    
    \draw[thick] (0,0.15) -- (1,-1);
    \draw[thick] (1,-1) -- (9,-3); 
    \draw[dotted, thick] (0,0.15) -- (1,0);    
\end{tikzpicture} 
\caption{Situation a): Newton polygons for $\phi_\theta(X)/X$ and $\phi_\theta(X)-e_0$.}\label{fig:1}
\end{figure}

In case a), the point $(q-1,v(a_1))$ lies above (or on) this line, hence the Newton polygon of the polynomial has exactly one segment, and this is of length $q^2-1$ and slope
$\frac{v(a_2)+1}{q^2-1}$. Hence, each non-zero torsion element $e_0$ has valuation
\[ v(e_0)=-\frac{v(a_2)+1}{q^2-1}. \]

For computing a $\theta^2$-torsion element above such an $e_0$, we have to consider the defining equation
$\phi_\theta(X)-e_0=a_2X^{q^2}+a_1X^q+\theta X-e_{0}$. Again the point $(q,v(a_1))$ lies above (or on) the line through $(1,v(\theta))=(1,-1)$ and $(q^2,v(a_2))$ which is given by the equation $y=-1+\frac{v(a_2)+1}{q^2-1}\cdot (x-1)$. The segment through the points $(0,v(e_0))$ and $(1,-1)$ has slope $-1-v(e_0)$, and therefore, its slope is smaller than the slope of the other segment which is 
$\frac{v(a_2)+1}{q^2-1}=-v(e_0)$ (see Figure \ref{fig:1}). Hence, both segments are sides of the Newton polygon, and in particular there is a segment of length $1$ and slope $-1-v(e_0)$. Hence there is a root $e_1$ in $\KI(e_0)$ which satisfies $v(e_1)=1+v(e_{0})$.

Inductively, one then sees that for $e_{i-1}\in \phi[\theta^{i}]$ with $v(e_{i-1})\geq -\frac{v(a_2)+1}{q^2-1}$, the defining polynomial $a_2X^{q^2}+a_1X^q+\theta X-e_{i-1}$ for the $\theta^{i+1}$-torsion elements lying above $e_{i-1}$ has a segment of length $1$ and slope $-1-v(e_{i-1})$, namely the one through the points $(0,v(e_{i-1}))$ and $(1,-1)$. This implies that there is a root $e_i$ in $\KI(e_{i-1})$ satisfying $v(e_i)=1+v(e_{i-1})$. Hence, indeed $\KI(\phi[\theta^{n+1}])\subseteq \KI(\phi[\theta])$ for all $n\geq 1$, and so $\KI(\Lambda_\phi)=\KI(\phi[\theta])$
by Proposition \ref{prop:lattice--torsion-points}.
Since $\KI(\phi[\theta])$ is the splitting field of the (separable) polynomial $\phi_\theta(X)$, it is a Galois extension of $\KI$. As the roots of $\phi_\theta(X)$ form an $\Fq$-vector space of dimension $2$, the Galois group is a subgroup of $\GL_2(\Fq)$, and we obtain the claim by recognizing that $\# \GL_2(\Fq) =(q^2-1)\cdot (q^2-q)$.

\medskip

\noindent
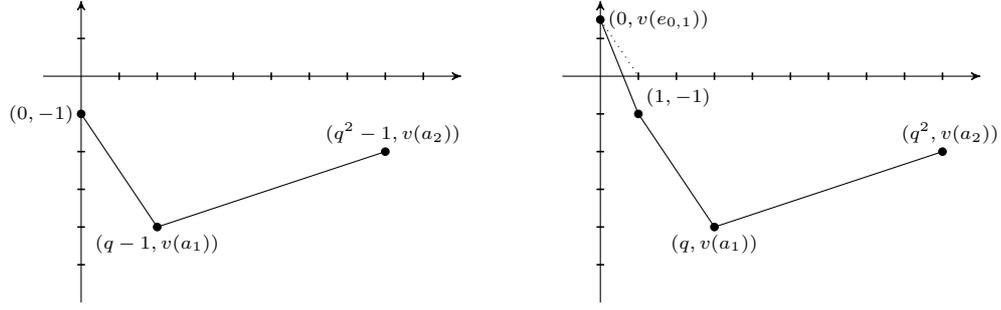
\begin{figure}[ht] 
\begin{tikzpicture}[
    scale=0.5,
    axis/.style={thin, ->, >=stealth'},
    every node/.style={color=black},
    ]
    \tiny

    \draw[axis] (-1,0)  -- (10,0) ;
    \draw[axis] (0,-6) -- (0,2) ;
    \draw[semithick] (1,-0.1) -- (1,0.1); 
    \draw[semithick] (2,-0.1) -- (2,0.1); 
    \draw[semithick] (3,-0.1) -- (3,0.1); 
    \draw[semithick] (4,-0.1) -- (4,0.1); 
    \draw[semithick] (5,-0.1) -- (5,0.1); 
    \draw[semithick] (6,-0.1) -- (6,0.1); 
    \draw[semithick] (7,-0.1) -- (7,0.1); 
    \draw[semithick] (8,-0.1) -- (8,0.1); 
    \draw[semithick] (9,-0.1) -- (9,0.1); 

    \draw[semithick] (-0.1,1) -- (0.1,1); 
    \draw[semithick] (-0.1,-1) -- (0.1,-1); 
    \draw[semithick] (-0.1,-2) -- (0.1,-2); 
    \draw[semithick] (-0.1,-3) -- (0.1,-3); 
    \draw[semithick] (-0.1,-4) -- (0.1,-4); 
    \draw[semithick] (-0.1,-5) -- (0.1,-5); 

	\coordinate (a0) at (0,-1);
	\coordinate (a1) at (2,-4);
	\coordinate (a2) at (8,-2);
	
    \draw [fill] (a0) circle [radius=.1] node [left] {$(0,-1)$};
    \draw [fill] (a1) circle [radius=.1] node [below]  {$(q-1,v(a_1))$};
    \draw [fill] (a2) circle [radius=.1] node [above,xshift=1mm]  {$(q^2-1,v(a_2))$};

    \draw (a0) -- (a1);
    \draw (a1) -- (a2);
    
\end{tikzpicture}\hfill 
\begin{tikzpicture}[
    scale=0.5,
    axis/.style={thin, ->, >=stealth'},
    every node/.style={color=black},
    ]
    \tiny

    \draw[axis] (-1,0)  -- (10,0) ;
    \draw[axis] (0,-6) -- (0,2) ;
    \draw[semithick] (1,-0.1) -- (1,0.1); 
    \draw[semithick] (2,-0.1) -- (2,0.1); 
    \draw[semithick] (3,-0.1) -- (3,0.1); 
    \draw[semithick] (4,-0.1) -- (4,0.1); 
    \draw[semithick] (5,-0.1) -- (5,0.1); 
    \draw[semithick] (6,-0.1) -- (6,0.1); 
    \draw[semithick] (7,-0.1) -- (7,0.1); 
    \draw[semithick] (8,-0.1) -- (8,0.1); 
    \draw[semithick] (9,-0.1) -- (9,0.1); 

    \draw[semithick] (-0.1,1) -- (0.1,1); 
    \draw[semithick] (-0.1,-1) -- (0.1,-1); 
    \draw[semithick] (-0.1,-2) -- (0.1,-2); 
    \draw[semithick] (-0.1,-3) -- (0.1,-3); 
    \draw[semithick] (-0.1,-4) -- (0.1,-4); 
    \draw[semithick] (-0.1,-5) -- (0.1,-5); 

	\coordinate (O) at (1,0);
	\coordinate (a0) at (1,-1);
	\coordinate (a1) at (3,-4);
	\coordinate (a2) at (9,-2);
	\coordinate (e01) at (0,1.5);
	
    \draw [fill] (a0) circle [radius=.1] node [above right] {$(1,-1)$};
    \draw [fill] (a1) circle [radius=.1] node [below]  {$(q,v(a_1))$};
    \draw [fill] (a2) circle [radius=.1] node [above,xshift=1mm]  {$(q^2,v(a_2))$};
    \draw [fill] (e01) circle [radius=.1] node [right]  {$(0,v(e_{0,1}))$};

    \draw (a0) -- (a1);
    \draw (a1) -- (a2);
    \draw (e01) -- (a0);
	\draw[dotted] (e01) -- (O);
\end{tikzpicture}
\caption{Situation b): Newton polygons for $\phi_\theta(X)/X$, and for $\phi_\theta(X)-e_0$ in the first case.}\label{fig:2}
\end{figure}

In case b), the Newton polygon for the polynomial $a_2X^{q^2-1}+a_1X^{q-1}+\theta $ for the $\theta$-torsion has two segments, since the point $(q-1,v(a_1))$ lies below the line through $(0,v(\theta))=(0,-1)$ and $(q^2-1,v(a_2))$. One segment is of length $q-1$ and slope $\frac{v(a_1)+1}{q-1}$, and the other of length $q^2-q$ and slope $\frac{v(a_2)-v(a_1)}{q^2-q}$ (see Figure \ref{fig:2}).

For $e_0$ a $\theta$-torsion point with $v(e_0)=-\frac{v(a_1)+1}{q-1}$, as in case a) one obtains a converging division tower $e_1, e_2, \ldots$ with $e_i\in \KI(e_0)$, and $v(e_i)=v(e_{i-1})+1$ (comp.~Figure \ref{fig:2}).

Now, consider a $\theta$-torsion point $e_0$ with $v(e_0)=-\frac{v(a_2)-v(a_1)}{q^2-q}=
\frac{v(a_1)-v(a_2)}{q(q-1)}$. For computing a $\theta^2$-torsion element above such an $e_0$, we again have to consider the defining equation
$a_2X^{q^2}+a_1X^q+\theta X-e_{0}$, and its Newton polygon determined by the four points $(0,v(e_0))$, $(1,-1)$, $(q,v(a_1))$ and $(q^2,v(a_2))$ (see Figure \ref{fig:3}). The first two points lie above the line through the last two points, hence one segment of the Newton polygon is the one through the latter points.
\noindent
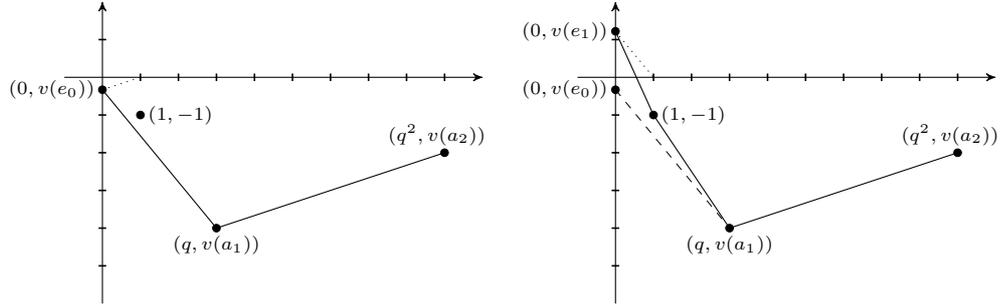
\begin{figure}[ht] 
\begin{tikzpicture}[
    scale=0.5,
    axis/.style={thin, ->, >=stealth'},
    every node/.style={color=black},
    ]
    \tiny

    \draw[axis] (-1,0)  -- (10,0) ;
    \draw[axis] (0,-6) -- (0,2) ;
    \draw[semithick] (1,-0.1) -- (1,0.1); 
    \draw[semithick] (2,-0.1) -- (2,0.1); 
    \draw[semithick] (3,-0.1) -- (3,0.1); 
    \draw[semithick] (4,-0.1) -- (4,0.1); 
    \draw[semithick] (5,-0.1) -- (5,0.1); 
    \draw[semithick] (6,-0.1) -- (6,0.1); 
    \draw[semithick] (7,-0.1) -- (7,0.1); 
    \draw[semithick] (8,-0.1) -- (8,0.1); 
    \draw[semithick] (9,-0.1) -- (9,0.1); 

    \draw[semithick] (-0.1,1) -- (0.1,1); 
    \draw[semithick] (-0.1,-1) -- (0.1,-1); 
    \draw[semithick] (-0.1,-2) -- (0.1,-2); 
    \draw[semithick] (-0.1,-3) -- (0.1,-3); 
    \draw[semithick] (-0.1,-4) -- (0.1,-4); 
    \draw[semithick] (-0.1,-5) -- (0.1,-5); 

	\coordinate (O) at (1,0);
	\coordinate (a0) at (1,-1);
	\coordinate (a1) at (3,-4);
	\coordinate (a2) at (9,-2);
	\coordinate (e02) at (0,-0.33);
	
    \draw [fill] (a0) circle [radius=.1] node [right] {$(1,-1)$};
    \draw [fill] (a1) circle [radius=.1] node [below]  {$(q,v(a_1))$};
    \draw [fill] (a2) circle [radius=.1] node [above,xshift=-1mm]  {$(q^2,v(a_2))$};
    \draw [fill] (e02) circle [radius=.1] node [left]  {$(0,v(e_{0}))$};

    \draw (a1) -- (a2);
	\draw[dotted] (e02) -- (O);
	\draw (e02) -- (a1);
\end{tikzpicture}\hfill 
\begin{tikzpicture}[
    scale=0.5,
    axis/.style={thin, ->, >=stealth'},
    every node/.style={color=black},
    ]
    \tiny

    \draw[axis] (-1,0)  -- (10,0) ;
    \draw[axis] (0,-6) -- (0,2) ;
    \draw[semithick] (1,-0.1) -- (1,0.1); 
    \draw[semithick] (2,-0.1) -- (2,0.1); 
    \draw[semithick] (3,-0.1) -- (3,0.1); 
    \draw[semithick] (4,-0.1) -- (4,0.1); 
    \draw[semithick] (5,-0.1) -- (5,0.1); 
    \draw[semithick] (6,-0.1) -- (6,0.1); 
    \draw[semithick] (7,-0.1) -- (7,0.1); 
    \draw[semithick] (8,-0.1) -- (8,0.1); 
    \draw[semithick] (9,-0.1) -- (9,0.1); 

    \draw[semithick] (-0.1,1) -- (0.1,1); 
    \draw[semithick] (-0.1,-1) -- (0.1,-1); 
    \draw[semithick] (-0.1,-2) -- (0.1,-2); 
    \draw[semithick] (-0.1,-3) -- (0.1,-3); 
    \draw[semithick] (-0.1,-4) -- (0.1,-4); 
    \draw[semithick] (-0.1,-5) -- (0.1,-5); 

	\coordinate (O) at (1,0);
	\coordinate (a0) at (1,-1);
	\coordinate (a1) at (3,-4);
	\coordinate (a2) at (9,-2);
	\coordinate (e02) at (0,-0.33);
	\coordinate (e12) at (0,1.22);
	
    \draw [fill] (a0) circle [radius=.1] node [right] {$(1,-1)$};
    \draw [fill] (a1) circle [radius=.1] node [below]  {$(q,v(a_1))$};
    \draw [fill] (a2) circle [radius=.1] node [above,xshift=-1mm]  {$(q^2,v(a_2))$};
    \draw [fill] (e02) circle [radius=.1] node [left]  {$(0,v(e_{0}))$};
    \draw [fill] (e12) circle [radius=.1] node [left]  {$(0,v(e_{1}))$};

    \draw (a1) -- (a2);
	\draw (e12) -- (a0);
	\draw (a0) -- (a1);
	\draw[very thin, dashed] (e02) -- (a1);
	\draw[dotted] (e12) -- (O);
	
\end{tikzpicture}
\caption{Situation b), second case: Newton polygons for $\phi_\theta(X)-e_0$, and for $\phi_\theta(X)-e_1$.}\label{fig:3}
\end{figure}

If $(0,v(e_0))$ lies below (or on) the line through $(1,-1)$ and $(q,v(a_1))$, we obtain the segment through  $(0,v(e_0))$ and $(q,v(a_1))$ as a side of the Newton polygon (as in Figure \ref{fig:3}). In this case, we obtain a $\theta^2$-torsion point $e_1$ above $e_0$ of valuation
\[  v(e_1)=-\frac{v(a_1)-v(e_0)}{q}=\frac{v(e_0)-v(a_1)}{q}=
\frac{v(a_1)-v(a_2)}{q^2(q-1)}-\frac{v(a_1)}{q} \]
(compare Figure \ref{fig:3}).

This continues inductively for $e_i$ ($i\geq 2$), as long as the point $(0,v(e_{i-1}))$ is below or on the line through $(1,-1)$ and $(q,v(a_1))$, and we obtain in this case

\begin{eqnarray*}
v(e_i)&=&\frac{v(e_{i-1})-v(a_1)}{q}=\frac{v(a_1)-v(a_2)}{q^{i+1}(q-1)}-v(a_1)\cdot \left(\frac{1}{q}+\frac{1}{q^2}+\ldots +\frac{1}{q^i}\right)\\
&=&\frac{v(a_1)-v(a_2)}{q^{i+1}(q-1)}-v(a_1)\cdot \frac{1-(1/q)^i}{q(1-1/q)}\\
&=&\frac{v(a_1)(1+q-q^{i+1})-v(a_2)}{q^{i+1}(q-1)}.
\end{eqnarray*}

After that the Newton polygon always has a segment of length $1$ from $(0,v(e_{i-1}))$ to $(1,-1)$ (as already in Figure \ref{fig:3} for $i-1=1$). Hence, $e_i\in \KI(e_{i-1})$ with $v(e_i)=v(e_{i-1})+1$.

The point $(0,v(e_{i-1}))$ is below the line through $(1,-1)$ and $(q,v(a_1))$, if $-1-e_{i-1}>\frac{v(a_1)+1}{q-1}$, i.e., if
\begin{eqnarray*}
 -1-\frac{v(a_1)(1+q-q^{i})-v(a_2)}{q^{i}(q-1)} &>& \frac{v(a_1)+1}{q-1} \\
\Leftrightarrow \qquad v(a_1) &<& \frac{v(a_2)-q^{i+1}}{q+1}.
\end{eqnarray*}

Therefore, by definition of $n$, this holds for $i\leq n$. Hence, $\KI(\Lambda_\phi)=
\KI(\phi[\theta],e_1,\ldots, e_n)=\KI(\phi[\theta^{n+1}])$.

From the construction and the Newton polygons, we see that $[\KI(\phi[\theta]):\KI]$ divides $(q^2-q)(q-1)=q(q-1)^2$,
and $[\KI(\phi[\theta^{n+1}]):\KI(\phi[\theta])]$ is a divisor of $q^{n}$. This gives the upper bound.
From the valuation of $e_n$, we see that in case that $v(a_1)-v(a_2)$ is prime to $q$, we have a ramification of order at least $q^{n+1}$, leading to the lower bound.
\end{proof}

\begin{rem}
We never used that the Drinfeld module is defined over $K$. We only used the valuations of the defining coefficients. Therefore, the theorem holds for any field inside $\KI$. Even more, for any finite extension $L_\infty$ of $\KI$, the same proof works apart from the explicit lower bound in b). This is the only point where we used that the valuation has integer values at elements in the base field. 
Nevertheless, appropriate choices of $a_1$ and $a_2$ still lead to arbitrarily large extensions of $L_\infty$.
\end{rem}

\begin{rem}
Even for Drinfeld modules $\phi$ with everywhere good reduction the extension $\KI(\Lambda_\phi)/\KI$ can be arbitrarily large. Indeed, we can choose any $a_2\in \Fq^\times$, and just have to arrange $a_1$  in such a way that
$ v(a_1)<\frac{-q^{n+1}}{q+1}$
 for a given $n\in \NN$. Then part b) of Theorem \ref{thm:bounds} shows that $q^{n+1}$ divides the degree of the extension. 
\end{rem}

\bibliographystyle{alpha}
\def\cprime{$'$}

\vspace*{.5cm}

\parindent0cm

\end{document}